\begin{document}



\section{Introduction}
\subsection{Motivation}
When one looks at a series of sequential random observations, the interest is sometimes restricted to keeping track of the times at which a particular outcome is observed in contrast to those at which it does not happen. This means that for a stochastic process $(X_t)_{t \in \mathbb{Z}}$ on a generic probability space $(\Omega, \mathcal{A}, P)$ such that $X_t:(\Omega, \mathcal{A})\to (S, \mathcal{S})$ for some measurable space $(S, \mathcal{S})$ one fixes a set $U \in \mathcal{S}$ and looks at the resulting process $(I_t)_{t \in \mathbb{Z}}$ with
 \begin{equation}\label{Eq:SimpleBin} I_t:=\mathds{1}_{\{X_t \in U\}}, \;\;\; t \in \mathbb{Z}.\end{equation}
The process $(I_t)_{t \in \mathbb{Z}}$ is then a binary time series (see, e.g., \cite{Kedem}, \cite{Keenan}), but could equally well be interpreted as a point process, i.e. a random measure of the form
  \begin{equation}\label{Eq:RandMeas} N:(\Omega, \mathcal{A}) \to \bigl(\hat{\mathcal{N}}_\mathbb{Z}^\star, \mathcal{B}(\hat{\mathcal{N}}_\mathbb{Z}^\star)\bigr), \;\;\; N(\omega)=\sum_{t \in \mathbb{Z}} \delta_{\{t\}}\mathds{1}_{\{I_t(\omega)=1\}}, \end{equation}
where $\bigl(\hat{\mathcal{N}}_\mathbb{Z}^\star, \mathcal{B}(\hat{\mathcal{N}}_\mathbb{Z}^\star)\bigr)$ denotes the space of all simple counting measures on $\mathbb{Z}$ with its Borel $\sigma$-algebra, see \cite{DaleyVJ}, Chapter 7. Note that each element $N$ of $\hat{\mathcal{N}}_\mathbb{Z}^\star$ is uniquely determined by its support, i.e. the occurrence times of the event $U$, and so $N(\omega)$ in \eqref{Eq:RandMeas} is uniquely determined by the random set
  	\begin{equation}\label{Eq:RandSupp} C(\omega)=C\bigl((I_t(\omega))_{t \in \mathbb{Z}}\bigr):=\{t \in \mathbb{Z}: I_t(\omega)=1\}.\end{equation}
  	
Special cases of $(S, \mathcal{S})$ and $U$ in \eqref{Eq:SimpleBin} are given by $(\mathbb{R}, \mathcal{B}(\mathbb{R}))$ and $U=(u,\infty)$ or $(\mathbb{R}^d, \mathcal{B}(\mathbb{R}^d))$ and $U=\{x \in \mathbb{R}^d: \|x \|>u\}$ for some $u>0$ and some norm $\| \cdot \|$, see Figure~\ref{Fig:1} for an example. 
	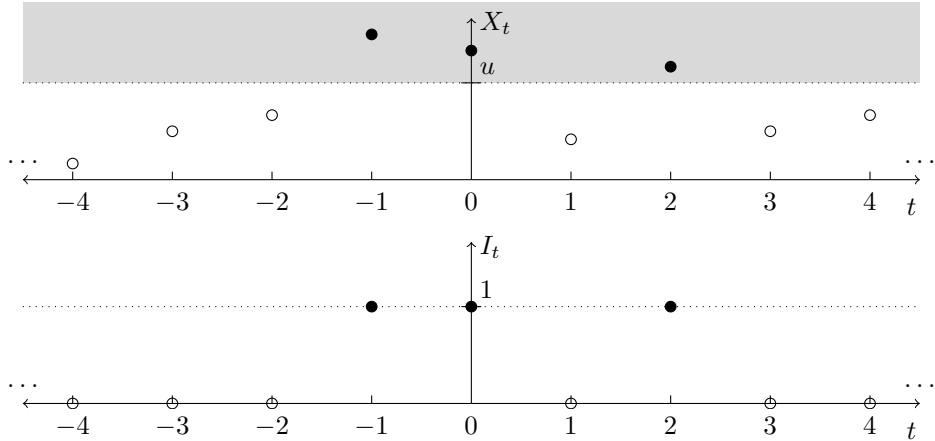
\begin{figure}
	\centering
	\begin{tikzpicture}
		\begin{axis}[
			xtick={-4,-3,-2,-1,0,1,2,3,4},
			ytick=\empty, 
			ymin=-0.3,
			ymax=11,
			xmin=-5.5,
			xmax=5.5,
			width=16cm,
			height=4cm,
			xtick style={draw=none}, 
			axis line style={draw=none}, 
			]
			
			\addplot[black, mark=*, mark options={black}, only marks] coordinates {
				
				(0, 8)
				(-1, 9)
				(2, 7)
				
			};
			\draw (axis cs:-4,1) circle[radius=2pt];
			\draw (axis cs:-3,3) circle[radius=2pt];
			\draw (axis cs:-2,4) circle[radius=2pt];
			\draw (axis cs:1,2.5) circle[radius=2pt];
			\draw (axis cs:3,3) circle[radius=2pt];
			\draw (axis cs:4,4) circle[radius=2pt];

			\draw (axis cs:1,0) -- (axis cs:1,0.5); 	
			\draw (axis cs:2,0) -- (axis cs:2,0.5); 	
			\draw (axis cs:3,0) -- (axis cs:3,0.5); 	
			\draw (axis cs:4,0) -- (axis cs:4,0.5); 				
			\draw (axis cs:-1,0) -- (axis cs:-1,0.5); 	
			\draw (axis cs:-2,0) -- (axis cs:-2,0.5); 
			\draw (axis cs:-3,0) -- (axis cs:-3,0.5); 	
			\draw (axis cs:-4,0) -- (axis cs:-4,0.5); 	
			\draw [fill=gray!30, draw=none] (axis cs:-4.5,6) rectangle (axis cs:4.5,11);			    
			\draw[->] (axis cs:0,0) -- (axis cs:4.5,0);	
			\draw[->] (axis cs:0,0) -- (axis cs:-4.5,0);	
			\draw[->] (axis cs:0,0) -- (axis cs:0,10);	
			\draw[-] (axis cs:-0.1,6) -- (axis cs:0.1,6);	
			\node[above right] at (axis cs:0,6) {$u$};	
			\node[above right] at (axis cs:0,8.5) {$X_t$};	
			\draw[dotted] (axis cs:-4.5,6) -- (axis cs:4.5,6);	
			\node[above] at (axis cs: -4.5,0.5) {$\ldots$};	
			\node[above] at (axis cs: 4.5,0.5) {$\ldots$};				
		\end{axis} 
		\node at (13,-0.3) {$t$}; 	
		
	\end{tikzpicture}
	\vspace{0.5cm}
	\begin{tikzpicture}
		\begin{axis}[
			xtick={-4,-3,-2,-1,0,1,2,3,4},
			ytick=\empty, 
			ymin=-0.3,
			ymax=11,
			xmin=-5.5,
			xmax=5.5,
			width=16cm,
			height=4cm,
			xtick style={draw=none}, 
			axis line style={draw=none}, 
			]
			
			\addplot[black, mark=*, mark options={black}, only marks] coordinates {
				
				(0, 6)
				(-1,6)
				(2, 6)
				
			};
			\draw (axis cs:-4,0) circle[radius=2pt];
			\draw (axis cs:-3,0) circle[radius=2pt];
			\draw (axis cs:-2,0) circle[radius=2pt];
			\draw (axis cs:1,0) circle[radius=2pt];
			\draw (axis cs:3,0) circle[radius=2pt];
			\draw (axis cs:4,0) circle[radius=2pt];

			\draw (axis cs:1,0) -- (axis cs:1,0.5); 	
			\draw (axis cs:2,0) -- (axis cs:2,0.5); 	
			\draw (axis cs:3,0) -- (axis cs:3,0.5); 	
			\draw (axis cs:4,0) -- (axis cs:4,0.5); 				
			\draw (axis cs:-1,0) -- (axis cs:-1,0.5); 	
			\draw (axis cs:-2,0) -- (axis cs:-2,0.5); 
			\draw (axis cs:-3,0) -- (axis cs:-3,0.5); 	
			\draw (axis cs:-4,0) -- (axis cs:-4,0.5); 	
			
			\draw[->] (axis cs:0,0) -- (axis cs:4.5,0);	
			\draw[->] (axis cs:0,0) -- (axis cs:-4.5,0);	
			\draw[->] (axis cs:0,0) -- (axis cs:0,10);	
			\draw[-] (axis cs:-0.1,6) -- (axis cs:0.1,6);	
			\node[above right] at (axis cs:0,6) {$1$};	
			\node[above right] at (axis cs:0,8.5) {$I_t$};	
			\draw[dotted] (axis cs:-4.5,6) -- (axis cs:4.5,6);	
			\node[above] at (axis cs: -4.5,0.5) {$\ldots$};	
			\node[above] at (axis cs: 4.5,0.5) {$\ldots$};				
		\end{axis} 
		\node at (13,-0.3) {$t$}; 	
		
	\end{tikzpicture}
	\caption{Above: A stretch of a realization of a time series $(X_t)_{t \in \mathbb{Z}}$ with solid dots indicating the observations in the set $U=(u,\infty)$ (shaded area) and empty dots for the remaining non-exceedances. Below: The same stretch of the corresponding realization of the time series $(I_t)_{t \in \mathbb{Z}}$. In this case, the set $C$ contains $-1,0,2$.}
	\label{Fig:1}
\end{figure}

 The resulting so-called exceedance process $(I_t)_{t \in \mathbb{Z}}$ plays an important role in extreme value theory and its applications, see \cite{FinkRoot},  \cite{HsingHueslerLeadbetter}, \cite{LeadbetterLindgrenRootzen}. There, one would typically analyze a \emph{stationary} time series $(X_t)_{t \in \mathbb{Z}}$ and look, in the univariate case, at the family of stationary exceedance processes given by
 \begin{equation}\label{Eq:ExceedSeq} I_t^n:=\mathds{1}_{\{X_t >u_n \}}, \;\;\; t \in \mathbb{Z}, \end{equation}
 for $n \in \mathbb{N}$, where $u_n$ is a sequence chosen in a way such that $P(X_0>u_n)>0$ but $P(X_0>u_n) \to 0$ as $n \to \infty$, or in other words for $u_n \uparrow F^{-1}_{X_0}(1):=\inf \{x \in \mathbb{R}: P(X_0 \leq x) =  1\}$ (with the convention $\inf \varnothing = \infty$). This approach can easily be generalized to multivariate time series $(\mathbf{X}_t)_{t \in \mathbb{Z}}$ by looking at the processes $I_t^n:=\mathds{1}_{\{\|\mathbf{X}_t\|>u_n \}}$ for some norm $\| \cdot \|$ instead, but for the remainder of this introduction we will restrict attention to the univariate case for ease of notation. 
 
 The resulting sequence of binary processes $(I_t^n)_{t \in \mathbb{Z}}$ (or, equivalently the corresponding point processes from \eqref{Eq:RandMeas} or their supports from \eqref{Eq:RandSupp}) indicate the times at which events happen which become more and more extreme and therefore increasingly unlikely. Consequentially, the finite-dimensional distributions of the processes $(I^n_t)_{t \in \mathbb{Z}}$ in \eqref{Eq:ExceedSeq} converge to those of a degenerate process consisting of only zeros. 
 
By conditioning on the event $\{I_0^n=1\}$ as $n \to \infty$, i.e. by looking at the conditional distribution
 \begin{equation}\label{Eq:GenIndSeq} \mathcal{L}\bigl((I_t^n)_{t \in \mathbb{Z}} \mid I_0^n=1 \bigr)\end{equation}
 we can avoid such a degenerate process and instead describe the limiting temporal structure that we see in rare events, given that such a rare event is observed at time $t=0$. The analysis of the resulting extremal clusters is one of the main aspects of extreme value theory for time series, starting with \cite{HsingHueslerLeadbetter}, \cite{Leadbetter} and leading to more recent results like, e.g., \cite{BasrakPlaninicSoulier}, \cite{BasrakSegers}, \cite{DombryHashorvaSoulier}, \cite{Janssen}, \cite{Planinic}. Under additional assumptions, the object of interest may in fact be not only the occurrence times of extremal events but their magnitude as well and it is possible to characterize the limiting process after a suitable, usually linear transformation, which means finding the weak limits of 
 \begin{equation*} \mathcal{L}\left(\left(\frac{X_t-a(u_n)}{b(u_n)}\right)_{t \in \mathbb{Z}}\; \bigg| \; X_0>u_n\right)\end{equation*}
 for suitably chosen $a:(-\infty, F_{X_0}^{-1}(1)) \to \mathbb{R}$ and $b:(-\infty, F_{X_0}^{-1}(1)) \to (0, \infty)$ as $u_n \uparrow F^{-1}_{X_0}(1)$. A typical form of such additional assumptions would be the framework of stationary regularly varying time series $(X_t)_{t \in \mathbb{Z}}$ (see \cite{KulikSoulier} for a general introduction), where we restrict our attention here to a univariate, non-negative time series $(X_t)_{t \in \mathbb{Z}}$. We call such a time series regularly varying if the marginal distribution of $X_0$ has a univariate regularly varying tail with some index $\alpha>0$, i.e.
 $$ \lim_{x \to \infty}\frac{P(X_0>\lambda x)}{P(X_0>x)}= \lambda^{-\alpha}, \;\;\; \mbox{ for all }\lambda>0,$$
 and there exists a non-degenerate, so-called tail process $(Y_t)_{t \in \mathbb{Z}}$, see \cite{BasrakSegers}, such that
 \begin{equation}\label{Eq:TailProc} \mathcal{L}\left(\left(\frac{X_t}{u}\right)_{t \in \{-m, \ldots, n\}}\; \bigg| \; X_0>u\right)  \overset{w}{\Longrightarrow} \mathcal{L}\left((Y_t)_{t \in \{-m, \ldots, n\}}\right)\end{equation}
 for all $m,n \in \mathbb{N}$ as $u \to \infty$, where $\overset{w}{\Longrightarrow}$ stands for convergence in distribution. Key properties of the tail process are that 
 \begin{equation}\label{Eq:TailProp} P(Y_0>x)=1-x^{-\alpha},\;\; x \geq 1, \mbox{ and } Y_0 \mbox{ is independent of } (Y_t/Y_0)_{t \in \mathbb{Z}}. \end{equation}  
 
 Furthermore, the stationarity of $(X_t)_{t \in \mathbb{Z}}$ influences the structure of the tail process $(Y_t)_{t \in \mathbb{Z}}$, leading to the so-called ``time change formula''
  \begin{equation}\label{Eq:TCF}
  	E\left(f\left((s Y_{t-i})_{t \in \mathbb{Z}}\right)\mathds{1}_{\{s Y_{-i}>1\}}\right)=s^\alpha E\left(f\left((Y_{t})_{t \in \mathbb{Z}}\right)\mathds{1}_{\{Y_i>s\}}\right),
  	\end{equation}
  which holds for all $s>0$ and all bounded, measurable functions $f: \mathbb{R}^{\mathbb{Z}} \to \mathbb{R}$, see \cite[Theorem 3.1]{BasrakSegers} and also \cite[Theorem 5.3.1]{KulikSoulier}. 
  Thus, for a non-negative, stationary regularly varying time series $(X_t)_{t \in \mathbb{Z}}$ with tail process $(Y_t)_{t \in \mathbb{Z}}$, setting $I_t^n=\mathds{1}_{\{X_t>u_n\}}$ for a sequence $u_n \to \infty$  implies the existence of a limiting process $(I_t)_{t \in \mathbb{Z}}:=(\mathds{1}_{\{Y_t>1\}})_{t \in \mathbb{Z}}$ such that 
  $$ \mathcal{L}\left((I_t^n)_{t \in \{-u, \ldots, v\}} \mid I_0^n=1 \right) \overset{w}{\Longrightarrow} \mathcal{L}\left((I_t)_{t \in \{-u, \ldots, v\}}\right), $$
  as \eqref{Eq:TailProp} implies that 1 is a continuity point for each $\mathcal{L}(Y_t), t \in \mathbb{Z}$. Furthermore, for any $A \subset \mathbb{Z}$ with $0 \in A$, choosing $s=1$,
$$ f((y_t)_{t \in \mathbb{Z}}):=\begin{cases}1, & \mbox{ if } y_t>1 \mbox{ for all } t\in A \\
  0, & \mbox{ else }
 \end{cases} $$
 and $i \in A$ in \eqref{Eq:TCF} gives 
 \begin{equation}\label{Eq:InvarI} P(I_{t-i}=1 \text { for all }  t \in A)=P(I_t=1 \text { for all }  t \in A). \end{equation}
 We will see in the following that an equality of the form \eqref{Eq:InvarI} holds in fact under much more general assumptions. 
  
 \subsection{Assumptions and Overview}
 The aim of this note is to take a step back from typical frameworks in extreme value theory which impose certain assumptions about the marginal distribution of a time series and its standardized exceedances and find similar but much more general invariance properties of what we will call cluster processes. While it might be helpful to keep in mind the results from extreme value theory mentioned in the previous paragraph and in particular the exceedance processes of the form \eqref{Eq:ExceedSeq} as a typical example of such a cluster process, we will derive invariance properties of the limiting distribution of \eqref{Eq:GenIndSeq} \emph{under minimal requirements}, making use only of the stationarity of the underlying binary processes $(I_t^n)_{n \in \mathbb{N}}$ and their conditional convergence, as formalized in the following assumption.
 We write $\mathbb{N} = \{1, 2, \ldots\}$ and $\mathbb{N}_0 = \mathbb{N} \cup \{0\}$.
 
 \begin{assumption}\label{Ass:1} Let $(I_t)_{t \in \mathbb{Z}}$ be a stochastic process such that there exists a family $(I_t^n)_{t \in \mathbb{Z}}, n \in \mathbb{N},$ of $\{0,1\}$-valued stationary stochastic processes such that $P(I_0^n=1)>0$ for all $n \in \mathbb{N}$ and
 	 $$ \mathcal{L}\left((I_t^n)_{t \in \{-u, \ldots, v\}} \mid I_0^n=1\right) \overset{w}{\Longrightarrow} \mathcal{L}\left((I_t)_{t \in \{-u, \ldots, v\}}\right), \;\;\; n \to \infty,$$
 	 for all $u,v \in \mathbb{N}$. 
 	\end{assumption}
 
 Observe that Assumption~\ref{Ass:1} immediately implies that $I_0=1$ almost surely. It should also be noted that the process $(I_t)_{t \in \mathbb{Z}}$ is in general not stationary. While Assumption~\ref{Ass:1} allows for $\mathcal{L}((I_t^n)_{t \in \mathbb{Z}}) \overset{w}{\Longrightarrow} \mathcal{L}((J_t)_{t \in \mathbb{Z}})$ for some stationary process $(J_t)_{t \in \mathbb{Z}}$ with $P(J_0=1)>0$, the setting in which $P(I_0^n =1) \to 0$ would typically lead to more interesting results as otherwise Theorem~\ref{Th:TCF} below is a rather trivial statement and the cluster sizes treated in Sections~\ref{Sect:ICSize} and~\ref{Sect:TCSize} are almost surely infinite.

 The process $(I_t)_{t \in \mathbb{Z}}$ can be interpreted as indicating the times at which (in the limit) an event returns, given that we observe it at time $t=0$. If $P(I_0^n=1) \to 0$ for $n \to \infty$, then an underlying dependence structure in each process $(I_t^n)_{t \in \mathbb{Z}}$ can, via conditioning on $\{I_0^n=1\}$, prevent the process $(I_t)_{t \in \mathbb{Z}}$ from being equal to $(\mathds{1}_{\{t=0\}})_{t \in \mathbb{Z}}$. Typically, it would be an underlying factor causing the process $(I_t)_{t \in \mathbb{Z}}$ to take the value 1 exactly at time $t=0$ but possibly also within a certain time span before and after $t=0$. However, if we assume that the impact of such an effect fades out as time passes, it is often reasonable to assume that 
 \begin{equation} \label{Eq:FinClus} 
 	P \Bigl(\lim_{t \to \infty}I_t=0 \Bigr)
 	= P \Bigl(\lim_{t \to -\infty}I_t=0 \Bigr)=1, 
 \end{equation}
 i.e. that each such episode is almost surely of finite length. Even though we will not assume a priori that \eqref{Eq:FinClus} holds, it motivates us to speak of a \emph{cluster} of observations, for which we only keep track of the times at which we see an observation belonging to this cluster. Therefore, we will name the process $(I_t)_{t \in \mathbb{Z}}$ the cluster process and the random set $C(\omega)=\{t \in \mathbb{Z}: I_t(\omega)=1\}$ the cluster.
 
 In the following section, we will derive invariance principles for the process $(I_t)_{t \in \mathbb{Z}}$. The results are related to the phenomenon expressed in the time change formula~\eqref{Eq:TCF} and in~\eqref{Eq:InvarI} but hold under Assumption~\ref{Ass:1} only. Next, we will use these principles to characterize the distribution of cluster sizes. To this end, we will introduce the two quantities of the size of an \emph{inspected} cluster and the size of a \emph{typical} cluster (see also \cite{Planinic}) in Sections~\ref{Sect:ICSize} and~\ref{Sect:TCSize}, respectively, and explore how they are related. Next, we interpret the results in the framework of extreme value theory, explain the relationship to the notion of the extremal index and derive an inequality between expected values of inspected and typical cluster sizes which is similar to the inspection paradox in renewal theory. In the final section, two simple examples illustrate our results.
 
\section{An invariance principle for the cluster process}

The following invariance principle for $(I_t)_{t \in \mathbb{Z}}$ follows from the stationarity of the underlying processes $(I_t^n)_{t \in \mathbb{Z}}$. 

 \begin{theorem}[Time change formula for $(I_t)_{t \in \mathbb{Z}}$]\label{Th:TCF}
Let $(I_t)_{t \in \mathbb{Z}}$ be as in Assumption \ref{Ass:1} and let $A \subset \mathbb{Z}$ with $0 \in A$. Then,
 \[
	P(I_t=1 \text { for all }  t \in A)=P(I_{t-a}=1 \text { for all }  t \in A) \;\;\; \mbox{ for all } a \in A.
 \]
 \end{theorem}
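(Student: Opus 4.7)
The plan is to reduce the statement to a computation with the pre-limit processes $(I_t^n)_{t \in \mathbb{Z}}$, where the result follows from plain stationarity, and then take limits using Assumption~\ref{Ass:1}.

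\medskip

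First I would treat the case where $A$ is finite. Since the random vectors $(I_t^n)_{t \in \{-u,\ldots,v\}}$ are $\{0,1\}$-valued, they take values in the finite set $\{0,1\}^{\{-u,\ldots,v\}}$, so weak convergence is just convergence of point masses on this discrete space. Hence, for any finite $B \subset \mathbb{Z}$ and any $b \in \{0,1\}^B$, Assumption~\ref{Ass:1} gives
\[
 P(I_t = b_t, \, t \in B) = \lim_{n \to \infty} P(I_t^n = b_t, \, t \in B \mid I_0^n = 1).
\]
Applied with $b \equiv 1$ and $B = A$ (using $0 \in A$, so $\{I_0^n=1\}$ is already implied), this yields
\[
 P(I_t = 1, \, t \in A) \;=\; \lim_{n \to \infty} \frac{P(I_t^n = 1, \, t \in A)}{P(I_0^n = 1)}.
\]
Applied with $B = A - a$ (which also contains $0$ since $a \in A$), it yields
\[
 P(I_{t-a} = 1, \, t \in A) \;=\; P(I_s = 1, \, s \in A-a) \;=\; \lim_{n \to \infty} \frac{P(I_s^n = 1, \, s \in A-a)}{P(I_0^n = 1)}.
\]

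\medskip

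The key step is then to invoke stationarity of the pre-limit processes: shifting by $a$ gives
\[
 P(I_s^n = 1, \, s \in A - a) \;=\; P(I_{s+a}^n = 1, \, s \in A - a) \;=\; P(I_t^n = 1, \, t \in A),
\]
so the two limits above coincide, proving the formula for finite $A$.

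\medskip

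To extend to arbitrary $A \subset \mathbb{Z}$ with $0 \in A$, I would exhaust $A$ by the increasing family $A_k := A \cap \{-k, \ldots, k\}$. For every $k \geq |a|$, both $0$ and $a$ lie in $A_k$, so the finite case applies and $P(I_t = 1, \, t \in A_k) = P(I_{t-a} = 1, \, t \in A_k)$. Since $\{I_t = 1, \, t \in A\} = \bigcap_k \{I_t = 1, \, t \in A_k\}$ is a decreasing intersection (and likewise for the shifted event), continuity of probability from above yields the equality for $A$.

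\medskip

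I do not anticipate a serious obstacle: the only subtlety is observing that weak convergence on $\{0,1\}^{\{-u,\ldots,v\}}$ reduces to convergence of the elementary cylinder probabilities, which is why Assumption~\ref{Ass:1} can be used with both index sets $A$ and $A - a$ simultaneously. Everything else is bookkeeping (the shift by $a$ in stationarity and the passage from finite to infinite $A$).
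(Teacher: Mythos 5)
Your proof is correct and follows essentially the same route as the paper: reduce to finite $A$, use that both $A$ and $A-a$ contain $0$ so the conditioning event is absorbed, apply stationarity of the pre-limit processes, and pass to general $A$ by continuity from above. The only cosmetic difference is that the paper inserts the redundant events $\{I_0^n=1, I_a^n=1\}$ before shifting, whereas you shift first and then observe the conditioning event is contained in the shifted event; the substance is identical.
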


 \begin{proof}
 Assume first that $|A|<\infty$, where $|A|$ denotes the cardinality of the set $A$. Let $a \in A$. Then, $P(I_t=1 \text { for all }  t \in A)$ and $P(I_{t-a}=1 \text { for all }  t \in A)$  are determined by the finite-dimensional distributions of the random vectors $(I_t)_{t \in A}$ and $(I_{t-a})_{t \in A}$, respectively. We thus get from Assumption~\ref{Ass:1} and from $0, a \in A$ that
 \begin{eqnarray*}P(I_t=1 \text { for all }  t \in A)
 	&=& \lim_{n \to \infty} \frac{P(I_t^n=1 \text { for all }  t \in A)}{P(I_0^n=1)}\\
 	&=&\lim_{n \to \infty} \frac{P(I_t^n=1 \text { for all }  t \in A, I_0^n=1, I_a^n=1)}{P(I_0^n=1)}.
 	\end{eqnarray*}
By stationarity of all $(I_t^n)_{t \in \mathbb{Z}}$ and since $0, a \in A$, the right-hand side equals
\begin{align*} 
	\lim_{n \to \infty} \frac{P(I_{t-a}^n=1 \text { for all }  t \in A, I_{-a}^n=1, I_0^n=1 )}{P(I_0^n=1)}
	&=\lim_{n \to \infty} \frac{P(I_{t-a}^n=1 \text { for all }  t \in A, I_0^n=1 )}{P(I_0^n=1)}\\ &=P(I_{t-a}=1 \text { for all }  t \in A),
\end{align*} 
which proves the statement in case $|A|<\infty$.
For general $A \subset \mathbb{Z}$ and $a \in A$, set $A_n:=A \cap [-\max(n,|a|),  \max(n,|a|)]$ for which $|A_n|<\infty$, $A_n \uparrow A$ as $n \to \infty$, and $a \in A_n$ for all $n \in \mathbb{N}$, so that
\begin{eqnarray*} P(I_t=1 \text { for all }  t \in A) &=& \lim_{n \to \infty} P(I_t=1 \text { for all }  t \in A_n)
\\
&=& \lim_{n \to \infty} P(I_{t-a}=1 \text { for all }  t \in A_n) = P(I_{t-a}=1 \text { for all }  t \in A)\end{eqnarray*}
from the previous case and by continuity from above. 
 \end{proof}

In terms of the random set $C$ in \eqref{Eq:RandSupp}, the conclusion of Theorem~\ref{Th:TCF} can be written as $P(A \subset C) = P(A - a \subset C)$ for $A \subset \mathbb{Z}$ such that $0, a \in A$. This equality foreshadows the following corollary, which is helpful for proving more refined statements about the times $t$ at which the event $\{I_t=1\}$ is observed, as will be needed in the next section. 

\begin{corollary}\label{Cor:TCFSupp} Let $(I_t)_{t \in \mathbb{Z}}$ be as in Assumption \ref{Ass:1} with $C(\omega)=\{t \in \mathbb{Z}: I_t(\omega)=1\}$.
	\begin{enumerate}[(a)]
		\item Let $A \subset \mathbb{Z}$ with $0 \in A$. Then
	$$ P( C = A)=P(C =  A-a) \;\;\; \mbox{ for all } a \in A, $$	
	where $A-a:=\{t \in \mathbb{Z}:a+t \in A\}$. 
	    \item Let $A \subset \mathbb{Z}$ with $0 \in A$ and let $t_1, t_2 \in \mathbb{Z}$ with $t_1 \leq 0 \le t_2$. Then,
	    \[ 
	    	P \bigl( C \cap [t_1,t_2] = A \cap [t_1,t_2] \bigr)
	    	= P \bigl( C \cap [t_1-a, t_2-a] =  (A-a) \cap [t_1-a,t_2-a] \bigr)
	    \]
	     for all $a \in A \cap [t_1, t_2]$.
	\end{enumerate}
	\end{corollary}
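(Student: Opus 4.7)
The plan is to prove (b) first via inclusion–exclusion, since it concerns only finitely many coordinates, and then to deduce (a) by exhausting $\mathbb{Z}$ with an increasing sequence of symmetric intervals.

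For (b), writing $B = A \cap [t_1,t_2]$ and $E = [t_1,t_2] \setminus B$, the event $\{C \cap [t_1,t_2] = B\}$ has indicator equal to $\mathds{1}_{\{B \subset C\}} \prod_{t \in E}(1-\mathds{1}_{\{t \in C\}})$. Expanding the (finite) product and taking expectations yields the inclusion–exclusion identity
\[
P\bigl(C \cap [t_1,t_2] = B\bigr) = \sum_{D \subset E} (-1)^{|D|} P(B \cup D \subset C).
\]
Because $0 \in B$ and $a \in B$, every index set $B \cup D$ appearing in the sum contains both $0$ and $a$, so Theorem~\ref{Th:TCF} applies and gives $P(B \cup D \subset C) = P((B-a) \cup (D-a) \subset C)$. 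The map $D \mapsto D - a$ is a cardinality-preserving bijection between subsets of $E$ and subsets of $E - a = [t_1-a,t_2-a] \setminus (B-a)$, so the transformed sum is exactly the analogous inclusion–exclusion expansion of $P(C \cap [t_1-a,t_2-a] = B-a)$, establishing (b).

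For (a), I would use the identity $\{C = A\} = \bigcap_{n \ge 1}\{C \cap [-n,n] = A \cap [-n,n]\}$ and note that this family of events is decreasing in $n$, so continuity from above gives $P(C=A) = \lim_{n \to \infty} P(C \cap [-n,n] = A \cap [-n,n])$. For $n \ge |a|$ we have $a \in A \cap [-n,n]$, so part (b) applies with $t_1 = -n$ and $t_2 = n$ and converts each term on the right into $P(C \cap [-n-a,\, n-a] = (A-a) \cap [-n-a,\, n-a])$. Since the shifted intervals $[-n-a,\, n-a]$ also exhaust $\mathbb{Z}$, a second application of continuity from above identifies this limit as $P(C = A - a)$.

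The main mild obstacle is the inclusion–exclusion bookkeeping in (b) and the verification that the two sums match term by term under $D \mapsto D - a$; once this is in place, both parts follow from Theorem~\ref{Th:TCF} and standard measure-theoretic continuity, with no case distinction between finite and infinite $A$.
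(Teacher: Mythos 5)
Your proposal is correct and follows essentially the same route as the paper: part (b) by the identical inclusion--exclusion expansion with Theorem~\ref{Th:TCF} applied termwise (each index set contains both $0$ and $a$), and part (a) by exhausting $\mathbb{Z}$ with intervals and continuity from above. The termwise matching under the bijection $D \mapsto D-a$ that you flag as the main bookkeeping step is exactly what the paper does implicitly, so there is nothing to add.
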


\begin{proof}
We start by showing (b). Let $t_1, t_2 \in \mathbb{Z}$ be such that $t_1 \le 0 \le t_2$ and let $a \in A \cap [t_1, t_2]$. By the inclusion--exclusion principle and by applying Theorem~\ref{Th:TCF} in the third step below, we find
\begin{align*}
 \lefteqn{P \bigl( C \cap [t_1,t_2] = A \cap [t_1,t_2] \bigr)}\\
 &= P(I_t =1 \mbox{ for all } t \in  A \cap [t_1,t_2], I_t=0 \mbox{ for all } t \in  \{t_1, \ldots, t_2\} \setminus A),\\
  &= \sum_{K \subset \{t_1, \ldots, t_2\} \setminus A}(-1)^{|K|} P \bigl( I_t=1 \text { for all }  t \in (A \cap [t_1,t_2])\cup K \bigr) \\
  &= \sum_{K \subset \{t_1, \ldots, t_2\} \setminus A}(-1)^{|K|} P \bigl( I_{t-a}=1 \text { for all }  t \in (A \cap [t_1,t_2])\cup K \bigr) \\
 &= P \bigl(I_t =1 \mbox{ for all } t \in  (A-a) \cap [t_1-a,t_2-a], \\
 & \hspace{1cm}I_t=0 \mbox{ for all } t \in  \{t_1-a, \ldots, t_2-a\} \setminus (A-a)\bigr),\\
 &= P \bigl(C \cap [t_1-a, t_2-a]= (A-a) \cap [t_1-a, t_2-a] \bigr).
 \end{align*}
Part~(a) follows from part~(b) by letting $t_1 \to -\infty, t_2 \to \infty$ and using continuity from above. 
	\end{proof}

\begin{remark} Invariance properties similar to Theorem~\ref{Th:TCF} are frequent topics in the theory of point processes and appear in different forms. One can show that Theorem~\ref{Th:TCF} implies that the process $(I_t)_{t \in \mathbb{Z}}$ is \emph{point-stationary} in the sense of \cite{Thorisson}, Chapter~9, and \cite{LastThorisson}. However, note that our Theorem~\ref{Th:TCF} derives its invariance property for a \emph{limiting} process derived from a sequence of stationary point processes, in contrast to the aforementioned connections between different forms of stationarity for \emph{one underlying} process. 
It is furthermore easy to show that, even if we have not used an assumption about the regular variation of the underlying time series $(X_t)_{t \in \mathbb{Z}}$ in the first place, the process $(Y_t):=(YI_t)_{t \in \mathbb{Z}}$ for some $Y$ with $P(Y>x)=1-x^{-\alpha}, x \geq 1,$ independent of $(I_t)_{t \in \mathbb{Z}}$ is a valid tail process of a regularly varying time series, see \cite{Janssen}. This allows to apply in particular the results of \cite{Planinic} to our setting. The concept of ``exceedance stationarity'' introduced there describes the same invariance property that we see in $(I_t)_{t \in \mathbb{Z}}$ if one restricts attention to the special case of a tail process of the form $(YI_t)_{t \in \mathbb{Z}}$.
\end{remark}
In the following sections we derive some further properties of $(I_t)_{t \in \mathbb{Z}}$ from Theorem~\ref{Th:TCF}. 

\section{The inspected cluster size distribution}\label{Sect:ICSize}

Without further knowledge of the processes $(I_t^n)_{t \in \mathbb{Z}}$ it is impossible to give more details on the particular form of the cluster process $(I_t)_{t \in \mathbb{Z}}$ beyond the invariance principle from the previous section. Yet, as we will see in this section, it is possible to derive more general statements about summary statistics derived from $(I_t)_{t \in \mathbb{Z}}$. Based on the interpretation of $(I_t)_{t \in \mathbb{Z}}$ as a cluster of events, the size of such a cluster would typically be of interest. Here, we should pay attention to the fact that $(I_t)_{t \in \mathbb{Z}}$ describes a cluster from which we know that it includes the arbitrarily chosen but fixed view point $t=0$, and so it makes sense to speak more precisely of an \emph{inspected cluster}. To measure the size of an inspected cluster we introduce the quantity 
\begin{equation}\label{Eq:clustersize} S^i(\omega):=|C(\omega)|= \sum_{t \in \mathbb{Z}}I_t \in \mathbb{N} \cup \{\infty\}, \end{equation}
which we call the \emph{inspected cluster size}. Note that for the exceedance process as introduced in \eqref{Eq:ExceedSeq}, several rather mild dependence conditions on the process $(X_t)_{t \in \mathbb{Z}}$ lead to $P(S^i<\infty)=1$ (see \cite{HsingHueslerLeadbetter}), but we do not rule out the case $P(S^i=\infty)>0$ a priori. 

Furthermore, given the reference point $t=0$, it may be of interest to keep track of how many instances of a cluster happen before and after this reference point, so we introduce
\begin{equation}
\label{Eq:FWBWclustersize} 
S_+^i(\omega):=\sum_{t=1}^\infty I_t \in \mathbb{N}_0 \cup \{\infty\}, \;\;\; 
S_-^i(\omega):=\sum_{t=1}^\infty I_{-t} \in \mathbb{N}_0 \cup \{\infty\}, \end{equation}
noting that
 \begin{equation}\label{Eq:Splusminus} S^i(\omega)=S_-^i(\omega) + 1 + S_+^i(\omega). 
 \end{equation}

We start by showing that the joint distribution of $(S_+^i,S_-^i)$ (and thereby also the distribution of $S^i$) is already determined by the distribution of $S_+^i$ or $S_-^i$, respectively. 

\begin{proposition}\label{Prop:BeforeAfter}
Let $(I_t)_{t \in \mathbb{Z}}$ be as in Assumption~\ref{Ass:1} and $S^i, S_+^i, S_-^i$ as defined in \eqref{Eq:clustersize}--\eqref{Eq:FWBWclustersize}. Then
\begin{equation}\label{eq:BeforeAfter} P(S_+^i \geq k, S_-^i \geq \ell)= P(S_+^i \geq k+\ell)= P(S_-^i \geq k+\ell), \;\;\; k,\ell \in \mathbb{N}_0. \end{equation}
\end{proposition}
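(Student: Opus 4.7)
The plan is to use Corollary~\ref{Cor:TCFSupp}(b) to perform a deterministic shift of the cluster by the position $-b_\ell$ of the $\ell$-th most-recent backward return. In cartoon form: if I observe a cluster with at least $\ell$ points to the left of $0$ and at least $k$ points to the right, then shifting my viewpoint so that the furthest of those $\ell$ backward points becomes the new origin turns the cluster into one whose first $k+\ell$ forward returns consist of the $\ell-1$ previously intermediate backward points, the former origin at position $0$, and the $k$ original forward returns.

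Assuming $k, \ell \geq 1$ (the cases $k = 0$ or $\ell = 0$ are handled identically, with one side of the partition degenerating), I would partition $\{S_+^i \geq k,\, S_-^i \geq \ell\}$ into atoms indexed by the positions $0 < a_1 < \cdots < a_k$ of the first $k$ forward returns and by the magnitudes $0 < b_1 < \cdots < b_\ell$ of the first $\ell$ backward returns. Each atom takes the form
\[
E_{\mathbf{a},\mathbf{b}} := \bigl\{ C \cap [-b_\ell, a_k] = \{-b_\ell, \ldots, -b_1, 0, a_1, \ldots, a_k\} \bigr\}.
\]
Applying Corollary~\ref{Cor:TCFSupp}(b) with $a = -b_\ell$ (which lies in the relevant subset of $[-b_\ell, a_k]$) yields that $P(E_{\mathbf{a},\mathbf{b}})$ equals the probability that $C \cap [0, a_k + b_\ell]$ equals $\{0, c_1, \ldots, c_{k+\ell}\}$ for the strictly increasing positive integers $c_1 < \cdots < c_{k+\ell}$ obtained by adding $b_\ell$ to each original coordinate. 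This shifted event says precisely that the first $k+\ell$ forward returns of $C$ occur at $c_1, \ldots, c_{k+\ell}$.

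The assignment $(\mathbf{a}, \mathbf{b}) \mapsto (c_1, \ldots, c_{k+\ell})$ is a bijection onto all admissible strictly increasing positive tuples of length $k+\ell$ (its inverse sets $b_\ell := c_\ell$ and then recovers the remaining $b_i$'s and $a_j$'s by subtraction), so summing the atomic probabilities gives the first claimed equality $P(S_+^i \geq k, S_-^i \geq \ell) = P(S_+^i \geq k+\ell)$. The second equality $P(S_+^i \geq k+\ell) = P(S_-^i \geq k+\ell)$ then falls out as the $k = 0$ specialization of the first, giving $P(S_-^i \geq \ell) = P(S_+^i \geq \ell)$. The main conceptual point to watch is that Corollary~\ref{Cor:TCFSupp}(b) requires a \emph{deterministic} shift and a \emph{deterministic} target pattern, so one has to fix $\mathbf{a}, \mathbf{b}$ first and then shift; summing afterwards is harmless because the atoms are pairwise disjoint. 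No cluster-finiteness assumption is needed, since only the \emph{first} $k$ (resp.\ $\ell$) returns are constrained.
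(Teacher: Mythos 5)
Your proposal is correct and matches the paper's proof essentially step for step: the paper likewise partitions $\{S_+^i \geq k, S_-^i \geq \ell\}$ into atoms determined by the first $k$ forward and first $\ell$ backward return times, applies Corollary~\ref{Cor:TCFSupp}(b) with the deterministic shift by the furthest backward return, and reindexes via the same bijection. Your derivation of the second equality as the $k=0$ case of the first is a slightly tidier route than the paper's appeal to time reversal, but the argument is otherwise identical.
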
 

\begin{proof}
	Let 
	$$T_i(\omega):=\begin{cases}
	\inf\{n \in \mathbb{N}_0: |C(\omega) \cap [0,n]| \geq i+1 \}, & \mbox{ for } i \geq 0 \\
	-\inf\{n \in \mathbb{N}_0: |C(\omega) \cap [-n,0]| \geq |i|+1 \}, & \mbox{ for } i < 0 \\
	\end{cases}$$
	with $T_i(\omega) \in \mathbb{Z} \cup \{-\infty, \infty\}$ denote the time at which the $|i|$-th 1 is observed in $(I_t)_{t \in \mathbb{Z}}$ before (if $i<0$) or after (if $i> 0$) time $t=0$, where $\inf \emptyset = +\infty$ by convention; note that $T_0 = 0$, since $0 \in C$. Using Corollary~\ref{Cor:TCFSupp}(b) in the third step below we can thus write
    \begin{eqnarray*} \lefteqn{P(S^i_+ \geq k, S^i_-\geq \ell)} \\
    	&=&\sum_{t_\ell^-<\ldots<t_0^-=0=t_0^+<\ldots<t_k^+} P(T_i=t_i^+, i=0, \ldots, k, \mbox{ and } T_{-j}=t_j^-, j=0, \ldots, \ell)\\
    &=& \sum_{t_\ell^-<\ldots<t_0^-=0=t_0^+<\ldots<t_k^+}P(C\cap [t_\ell^-,t_k^+]=\{t_\ell^-, \ldots, 0, \ldots, t_k^+\}) \\    
&=& \sum_{t_\ell^-<\ldots<t_0^-=0=t_0^+<\ldots<t_k^+} P(C\cap [0,t_k^+ -t_\ell^-]=\{0, \ldots, -t_\ell^-, \ldots, t_k^+-t_\ell^-\}) \\
&=& \sum_{0=s_0^+<\ldots<s_k^+<\ldots <s_{k+\ell}^+} P(C\cap [0,s_{k+\ell}^+]=\{0, \ldots, s_k^+, \ldots, s_{k+\ell}^+\}) \\
&=& \sum_{0=s_0^+<\ldots<s_k^+<\ldots <s_{k+\ell}^+} P(T_i=s_{i}^+, i=0, \ldots, k + \ell) = P(S^i_+ \geq k+\ell).
\end{eqnarray*}
Showing that $P(S_+^i \geq k, S_-^i \geq \ell)= P(S_-^i \geq k+\ell)$ can be done analogously as above or simply by noting that if $(I_t)_{t \in \mathbb{Z}}$ satisfies Assumption~\ref{Ass:1}, then so does $(I_{-t})_{t \in \mathbb{Z}}$. 
	\end{proof}

Proposition \ref{Prop:BeforeAfter} has a few direct implications for the joint distribution of $(S_+^i, S_-^i)$. 
\begin{theorem}
	\label{Th:timechange:eq}
	Let the assumptions of Proposition~\ref{Prop:BeforeAfter} hold.  
	\begin{enumerate}[(a)]
	\item The law of $(S^i_-, S^i_+)$ is symmetric and for all $k, \ell \in \mathbb{N}_0$, we have
	\begin{align}
	\label{eq:Npm:eqge}
	P( S^i_+ = k, S^i_- \ge \ell)
	&= P( S^i_{\pm} = k + \ell), \\
	\label{eq:Npm:eq}
	P( S^i_+ = k, S^i_- = \ell ) 
	&= P( S^i_{\pm} = k + \ell ) - P( S^i_{\pm} = k + \ell + 1),
	\end{align}
	where the variable $S^i_{\pm}$ on the right-hand side can represent both $S^i_-$ and $S^i_+$.
	
	\item The common probability mass function of $S^i_-$ and $S^i_+$ is nonincreasing on~$\mathbb{N}_0$:
	\begin{equation}
	\label{eq:N:noninc}
	P(S^i_{\pm} = k) \ge P(S^i_{\pm} = k+1), \qquad k \in \mathbb{N}_0.
	\end{equation}
	\end{enumerate}
\end{theorem}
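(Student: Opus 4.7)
The entire theorem should fall out of Proposition~\ref{Prop:BeforeAfter} by elementary manipulations; I do not expect any essential obstacle, only care in keeping the inclusion--exclusion steps bookkeeping-correct.

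My plan is to first observe the key consequence of Proposition~\ref{Prop:BeforeAfter}: the joint upper-tail function $(k,\ell) \mapsto P(S^i_+ \geq k, S^i_- \geq \ell)$ depends only on the sum $k+\ell$, so in particular it is symmetric in $(k,\ell)$. Since $S^i_\pm$ take values in $\mathbb{N}_0$ and the joint survival function determines the joint distribution on $\mathbb{N}_0^2$, this symmetry of the survival function lifts to symmetry of the law of $(S^i_-, S^i_+)$. (As a sanity check, it also recovers that $S^i_+ \stackrel{d}{=} S^i_-$ via marginals.)

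Next I would derive \eqref{eq:Npm:eqge} by the one-step difference
\[
P(S^i_+ = k, S^i_- \geq \ell) = P(S^i_+ \geq k, S^i_- \geq \ell) - P(S^i_+ \geq k+1, S^i_- \geq \ell),
\]
and applying Proposition~\ref{Prop:BeforeAfter} to each term to rewrite both as $P(S^i_\pm \geq k+\ell)$ and $P(S^i_\pm \geq k+\ell+1)$, whose difference is $P(S^i_\pm = k+\ell)$. For \eqref{eq:Npm:eq}, I take a further difference in $\ell$:
\[
P(S^i_+ = k, S^i_- = \ell) = P(S^i_+ = k, S^i_- \geq \ell) - P(S^i_+ = k, S^i_- \geq \ell+1),
\]
and plug in \eqref{eq:Npm:eqge} on each side, obtaining the claimed telescoping expression. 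The freedom in interpreting $S^i_\pm$ as either $S^i_-$ or $S^i_+$ on the right-hand side is legitimate because Proposition~\ref{Prop:BeforeAfter} already gives $P(S^i_+ \geq m) = P(S^i_- \geq m)$ for all $m \in \mathbb{N}_0$.

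Part (b) then follows with no extra work: setting $\ell = 0$ in \eqref{eq:Npm:eq} gives
\[
P(S^i_\pm = k) - P(S^i_\pm = k+1) = P(S^i_+ = k, S^i_- = 0) \geq 0,
\]
which is exactly \eqref{eq:N:noninc}. The only subtlety worth flagging is that \eqref{eq:Npm:eq} must be read with the same choice of $S^i_\pm$ on both subtracted terms so that the monotonicity conclusion is unambiguous; this is automatic from the derivation just sketched.
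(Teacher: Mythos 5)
Your proposal is correct and follows essentially the same route as the paper's own proof: symmetry of the joint survival function from Proposition~\ref{Prop:BeforeAfter}, then two successive one-step differences to get \eqref{eq:Npm:eqge} and \eqref{eq:Npm:eq}, and finally $\ell=0$ for part (b). No gaps.
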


\begin{proof}
	(a) Equation~\eqref{eq:BeforeAfter} readily implies that $P(S^i_+ \ge k, S^i_- \ge \ell ) =  P(S^i_+ \ge \ell, S^i_- \ge k)$ for all $k, \ell \in \mathbb{N}_0$. As a consequence, the law of $(S^i_-, S^i_+)$ is symmetric and in particular $S^i_+$ and $S^i_-$ have the same distribution, so the distribution of the random variable $S^i_{\pm}$ is well defined. Now, for all $k, \ell \in \mathbb{N}_0$, we have, by \eqref{eq:BeforeAfter}, replacing `$S^i_+$' by `$S^i_{\pm}$' on the right-hand side,
	\begin{align*}
		P( S^i_+ = k, S^i_- \ge \ell )
		&= P( S^i_+ \ge k, S^i_- \ge \ell ) - P( S^i_+ \ge k+1, S^i_- \ge \ell ) \\
		&= P( S^i_{\pm} \ge k + \ell) - P( S^i_{\pm} \ge k + \ell + 1 ) = P( S^i_{\pm} = k + \ell ),
	\end{align*}
     which proves \eqref{eq:Npm:eqge} and further implies
     \begin{align*}
     P( S^i_+ = k, S^i_- = \ell)
     &= P( S^i_+ = k, S^i_- \ge \ell ) - P( S^i_+ = k, S^i_- \ge \ell + 1 ) \\
     &= P( S^i_{\pm} = k + \ell ) - P( S^i_{\pm} = k + \ell + 1 ),
     \end{align*}
     which proves \eqref{eq:Npm:eq}. 
    
    (b) Setting $\ell =0$ in \eqref{eq:Npm:eq} yields
    $$ 0 \leq 	P( S^i_+ = k, S^i_-=0) = P( S^i_{\pm} = k ) - P( S^i_{\pm} = k + 1) $$
    for all $k \in \mathbb{N}_0$ and thus the monotonicity of the probability mass function of $S^i_{\pm}$.
\end{proof}

The final result of this section allows us to conveniently derive the distribution of $S_{\pm}^i$ from that of $S^i$ and vice versa.

\begin{theorem}\label{Th:FinClusUnif} 	Let the assumptions of Proposition \ref{Prop:BeforeAfter} hold.   
	\begin{enumerate}[(a)]
	\item For all $k \in \mathbb{N}$, we have 
	\begin{equation} P(S^i = k) = k \cdot [P(S_\pm^i = k-1) - P(S_\pm^i = k)].\end{equation} 
	\item For all $k \in \mathbb{N}$ with $P(S^i=k)>0$, we have
	\begin{equation}\label{Eq:Spmcond} P(S^i_{\pm}=\ell \mid S^i=k)=\frac{1}{k}, \;\;\; \ell=0, \ldots, k-1. \end{equation}
	\item If $P(S^i=\infty)>0$, then 
	\begin{equation} P(S^i_{\pm}=\infty \mid S^i=\infty)=1. \end{equation}
	\end{enumerate}
\end{theorem}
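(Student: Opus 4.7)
The plan is to derive parts (a) and (b) simultaneously from the explicit formula in \eqref{eq:Npm:eq} of Theorem~\ref{Th:timechange:eq}, and then handle part (c) separately via \eqref{eq:Npm:eqge} together with countable additivity.

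The starting observation is that by \eqref{Eq:Splusminus}, the event $\{S^i = k\}$ decomposes as the disjoint union $\bigsqcup_{\ell=0}^{k-1} \{S^i_+ = \ell, \, S^i_- = k-1-\ell\}$ for $k \in \mathbb{N}$. Applying \eqref{eq:Npm:eq} to each summand, we see that
\[
P(S^i_+ = \ell, \, S^i_- = k-1-\ell) = P(S^i_\pm = k-1) - P(S^i_\pm = k),
\]
which \emph{does not depend on $\ell$}. Summing over $\ell = 0, \dots, k-1$ gives part (a) directly. For part (b), dividing by $P(S^i = k)$ (assumed positive) yields $P(S^i_+ = \ell \mid S^i = k) = 1/k$ for $\ell = 0, \dots, k-1$; the statement for $S^i_-$ follows from the symmetry of the joint law of $(S^i_-, S^i_+)$ established in Theorem~\ref{Th:timechange:eq}(a).

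For part (c), I will show that $P(S^i_+ < \infty, \, S^i = \infty) = 0$. Since $S^i = \infty$ forces at least one of $S^i_\pm$ to be infinite, this event equals $\{S^i_+ < \infty, \, S^i_- = \infty\} = \bigcup_{k \in \mathbb{N}_0} \{S^i_+ = k, \, S^i_- = \infty\}$. Using \eqref{eq:Npm:eqge} and sending $\ell \to \infty$ by continuity from above gives
\[
P(S^i_+ = k, \, S^i_- = \infty) = \lim_{\ell \to \infty} P(S^i_\pm = k + \ell) = 0,
\]
where the limit vanishes because $\sum_{n \in \mathbb{N}_0} P(S^i_\pm = n) \le 1$ forces the individual terms to tend to $0$. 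A countable union of null sets being null, we conclude $P(S^i_+ < \infty \mid S^i = \infty) = 0$, and the analogous argument (or the symmetry of the joint law) handles $S^i_-$.

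There is no real obstacle here: the entire proof is a bookkeeping exercise that exploits the striking $\ell$-independence of $P(S^i_+ = \ell, \, S^i_- = k-1-\ell)$. The only point requiring mild care is the passage to the limit in part (c), where one must remember that $S^i_\pm$ may be defective (i.e.\ $P(S^i_\pm = \infty) > 0$), but this is irrelevant since what is needed is only the summability of its finite mass function.
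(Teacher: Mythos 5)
Your proposal is correct and follows essentially the same route as the paper: parts (a) and (b) via the disjoint decomposition of $\{S^i=k\}$ and the $\ell$-independence of $P(S^i_+=\ell,\,S^i_-=k-1-\ell)$ from \eqref{eq:Npm:eq}, and part (c) via \eqref{eq:Npm:eqge}, continuity from above, and summability of the mass function of $S^i_\pm$. No gaps.
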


\begin{proof}
	(a) Let $k \in \mathbb{N}$ and use identity~\eqref{eq:Npm:eq} to find \begin{align*}
			P(S^i=k)&=\sum_{j=0}^{k-1}P(S^i_+=j, S^i_-=k-j-1) = \sum_{j=0}^{k-1}[P(S^i_{\pm}=k-1)-P(S^i_{\pm}=k)] \\
			&= k \cdot [P(S^i_{\pm}=k-1)-P(S^i_{\pm}=k)].
			\end{align*}		
			
	(b) Let $k \in \mathbb{N}$ with $P(S^i=k)>0$ and $\ell \in \{0, \ldots, k-1\}$. Then, by making use of identity~\eqref{eq:Npm:eq} and (a), we have
	\begin{align*}
		P(S^i_{\pm}=\ell \mid S^i=k)&= \frac{P(S^i_{\pm}=\ell, S^i=k)}{P(S^i=k)}= \frac{P(S^i_+=\ell, S^i_-=k-\ell-1)}{P(S^i=k)} \\
		&= \frac{P(S_\pm^i = k-1) - P(S_\pm^i = k)}{k \cdot [P(S_\pm^i = k-1) - P(S_\pm^i = k)]}=\frac{1}{k}.
	\end{align*}
	
	(c) Assume that $P(S^i=\infty)>0$. Then for all $\ell \in \mathbb{N}_0$, we have
	\begin{align*}
	P(S^i_{\pm}=\ell, S^i=\infty)&= \lim_{k \to \infty}P(S^i_+=\ell, S^i\geq k) = \lim_{k \to \infty}P(S^i_+=\ell, S^i_-\geq k-\ell-1) \\
	&= \lim_{k \to \infty}P(S^i_{\pm}=k-1),
		\end{align*}
	where in the last step \eqref{eq:Npm:eqge} was used. Summability of the probability mass function implies that $\lim_{k \to \infty}P(S^i_{\pm}=k-1) =0$ and thus $P(S^i_{\pm}=\ell, S^i=\infty)=0$ for all $\ell \in \mathbb{N}_0$, which leads to the desired result.
	\end{proof}

Taken together, we see that the size of an inspected cluster and the number of observations before and after the \emph{inspection point} $t = 0$ can be modelled as a two-step procedure: First, the inspected cluster size is determined according to the distribution of $S^i$. Second, the number of observations before and after the inspection point are either both infinite or determined according to a uniformly distributed random split of the inspected cluster size.  

\section{The typical cluster size distribution and the extremal index}\label{Sect:TCSize}

Given our definition of the cluster process in Assumption~\ref{Ass:1}, it is natural to proceed as in the previous section and analyze the inspected cluster size. It should, however, be noted that the resulting quantities and their distributions may not correspond to the understanding of a cluster as desired in applications, as the conditioning on $\{I_0^n=1\}$ in Assumption~\ref{Ass:1} induces a size-bias similar to the inspection paradox, see \cite[Chapter I.4]{Feller}, and \cite{Planinic} for the size-bias observed in tail processes of regularly varying time series. To illustrate this point, think again of the exceedance processes of \eqref{Eq:ExceedSeq} and assume that $X_t$ stands for water level at a site protected by a dike. If the water level is extremely high for several days in a row, the dike will be weakened by pressure and moisture and may break. Thus, to construct a safe dike it would be important to know how long, on average, an extremal episode of high water levels lasts. From knowledge of the distribution of the inspected cluster size $S^i$ one could now be tempted to use 
  \begin{equation}\label{Eq:EInspC}  E(S^i)=E(S^i_-)+1+E(S^i_+)=1+2 E(S^i_{\pm}) \end{equation}
  as the relevant quantity. It has to be noted, however, that this is the expected value of the overall length of an \emph{inspected} cluster, i.e.\ one that we observe at a random time during an extremal episode and thus typically already after some extremal events have happened previous to the inspection. This random choice of inspection time induces a bias towards longer clusters, similar to the inspection or waiting-time paradox, where the size of an inspected epoch of a renewal process stochastically dominates the size of a typical epoch, see \cite{Angus}. 
  
  To avoid this bias and instead make sure that we look at a typical epoch inspected at a fixed point relative to the cluster, the most straightforward way to do so is to make sure that we inspect the extremal cluster at the beginning, i.e. at a time at which $S^i_-=0$. The idea behind this bias-correction is similar to the concept of anchoring maps, as introduced in \cite{BasrakPlaninic}. It leads us to introducing the \emph{typical} cluster size distribution given by
     \begin{equation}\label{Eq:TypClus} \mathcal{L}(S^t):=\mathcal{L}(S^i \mid S^i_-=0), \end{equation}
  if $P(S^i_-=0)>0$ (otherwise it is not defined), see also \cite{Planinic} for more on the notion of a typical cluster in the context of regularly varying time series. It is worth mentioning that Theorem~\ref{Th:FinClusUnif} implies that the distribution of $S^t$ is defined if and only if $P(S^i<\infty)>0$ and that in this case $P(S^t<\infty)=1$. Furthermore, by Theorem~\ref{Th:timechange:eq}(a), we could have equally well conditioned on $\{S^i_+=0\}$ in \eqref{Eq:TypClus}, i.e.\ on making our observation at the end of a cluster instead of at the beginning, leading to the same distribution of the typical cluster.

Again for the special case of the exceedance processes in \eqref{Eq:ExceedSeq} and under mild assumptions on $(X_t)_{t \in \mathbb{Z}}$, the probability of the conditioning event equals the well-known \emph{extremal index}, see \cite[Chapter 3.7]{LeadbetterLindgrenRootzen}, \cite{OBrien} and also \cite{Buriticaetal} for a recent overview. Therefore, we denote it by
\begin{equation}\label{Eq:ExtrIndex}
\theta := P(S^i_{\pm} = 0) = P(I_t = 0 \text { for all } t>0) = P(I_t = 0 \text { for all } t<0).
\end{equation}

A common interpretation of the extremal index is as the reciprocal of the ``expected cluster size'', see \cite{Leadbetter}. Some care has to be taken how to define this ``cluster size'', as this is indeed not $E(S^i)$ from \eqref{Eq:EInspC} but $E(S^t)$. In order to derive the latter, it is convenient to introduce, provided $\theta>0$, the quantity
 \begin{equation}\label{Eq:ClusterPMF}
 \pi_k := P(S^t = k) = P(S^i=k \mid S^i_-=0), \qquad k \in \mathbb{N},
 \end{equation}
i.e.\ the probability mass function of the typical cluster size. 
\begin{proposition} \label{Prop:ClusterPMF}
Let $(I_t)_{t \in \mathbb{Z}}$ be as in Assumption \ref{Ass:1} and assume $\theta>0$. Then, 
	\begin{equation}\label{Eq:StfromSi}
	\pi_k = \theta^{-1}[ P(S^i_{\pm} = k-1) - P(S^i_{\pm}= k) ], \qquad k \in \mathbb{N}.
	\end{equation}
Conversely, 
\begin{equation}\label{Eq:SifromSt}
	P(S^i_{\pm}=\ell) = \theta P(S^t \geq \ell +1 ) = \theta \sum_{i = \ell +1}^\infty \pi_i, \qquad \ell \in \mathbb{N}_0.
	\end{equation}
	\end{proposition}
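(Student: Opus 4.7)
The plan is to prove both identities by unpacking the definition of conditional probability and invoking the tools already assembled, principally equation~\eqref{eq:Npm:eq} from Theorem~\ref{Th:timechange:eq}(a). Since $\theta > 0$ is assumed, the conditional probabilities defining $\pi_k$ are well-defined, and by Theorem~\ref{Th:FinClusUnif}(c) (or rather its contrapositive), the event $\{S^i_- = 0\}$ forces $S^i < \infty$ almost surely, so $S^t$ takes values in $\mathbb{N}$ with $\sum_{k \ge 1} \pi_k = 1$.

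For \eqref{Eq:StfromSi}, I would start from
\[
\pi_k = \frac{P(S^i = k, S^i_- = 0)}{P(S^i_- = 0)} = \frac{P(S^i_+ = k-1, S^i_- = 0)}{\theta},
\]
where I used the decomposition $S^i = S^i_- + 1 + S^i_+$ from \eqref{Eq:Splusminus} to rewrite the event, along with the definition $\theta = P(S^i_- = 0)$ in \eqref{Eq:ExtrIndex}. Applying \eqref{eq:Npm:eq} with $\ell = 0$ to the numerator then immediately yields $\pi_k = \theta^{-1}[P(S^i_\pm = k-1) - P(S^i_\pm = k)]$.

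For \eqref{Eq:SifromSt}, I would simply telescope \eqref{Eq:StfromSi}. Summing $\theta \pi_i = P(S^i_\pm = i-1) - P(S^i_\pm = i)$ over $i = \ell+1, \ldots, N$ gives $P(S^i_\pm = \ell) - P(S^i_\pm = N)$. Since $S^i_\pm$ is $\mathbb{N}_0$-valued (possibly with an atom at $+\infty$), the probability mass function is summable and $P(S^i_\pm = N) \to 0$ as $N \to \infty$; letting $N \to \infty$ yields the first equality of \eqref{Eq:SifromSt}, while the second equality is just the rewriting $P(S^t \ge \ell+1) = \sum_{i=\ell+1}^\infty \pi_i$.

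There is no real obstacle here: both identities are essentially algebraic consequences of \eqref{eq:Npm:eq} once the conditioning is unwound via \eqref{Eq:Splusminus}. The only point requiring a line of justification is the vanishing of $P(S^i_\pm = N)$ as $N \to \infty$ needed for the telescoping, which follows from summability of the marginal mass function on $\mathbb{N}_0$.
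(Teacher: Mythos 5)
Your proof is correct and follows essentially the same route as the paper: the first identity is obtained exactly as in the paper's proof, by unwinding the conditioning via \eqref{Eq:Splusminus} and applying \eqref{eq:Npm:eq} with $\ell = 0$. For the second identity the paper sums $P(S^i_+ = i, S^i_- = 0)$ over $i \ge \ell$ and invokes \eqref{eq:Npm:eqge} directly, whereas you telescope \eqref{Eq:StfromSi}; these computations are interchangeable, and your justifications that $P(S^i_\pm = N) \to 0$ as $N \to \infty$ and that $P(S^t < \infty) = 1$ (via Theorem~\ref{Th:FinClusUnif}) cover the same boundary issues the paper addresses.
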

\begin{proof}
	For $k \in \mathbb{N}$ we have
	\begin{align*}
		\pi_k
		&= \frac{P(S^i=k, S^i_-=0)}{P(S^i_-=0)} = \theta^{-1} P(S^i_+=k-1, S^i_-=0) = \theta^{-1} [P(S^i_{\pm} = k-1) - P(S^i_{\pm}= k)]
		\end{align*}
	by the definition of $\theta$ in \eqref{Eq:ExtrIndex} and by Equation~\eqref{eq:Npm:eq}, which gives \eqref{Eq:StfromSi}. Furthermore, since $P(S^t<\infty)=1$ and $P(S^i_+=\infty,S^i_-=0)=0$ by Theorem \ref{Th:FinClusUnif}, we have for $\ell \in \mathbb{N}_0$,
	\begin{align*} P(S^t \geq \ell + 1)= \sum_{i=\ell}^\infty \pi_{i+1} 
		&= \theta^{-1} \sum_{i=\ell}^\infty P(S^i_+=i, S^i_-=0) \\
		&= \theta^{-1} P(S^i_+ \geq \ell, S^i_-=0) = \theta^{-1} P(S^i_{\pm}=\ell),
		\end{align*}
	by \eqref{eq:Npm:eqge}, which leads to \eqref{Eq:SifromSt}.
	\end{proof}

We are now able to relate the extremal index to the expected values of $S^i$ and $S^t$. 

\begin{theorem}	\label{prop:clustersize} Let $(I_t)_{t \in \mathbb{Z}}$ be as in Assumption \ref{Ass:1}.
	\begin{enumerate}[(a)]
		\item The three equalities $\theta = 0$,  $P(S^i_{\pm} =  \infty)=1$ and $P(S^i=\infty) = 1$ are equivalent. 	
	\item The inequalities $\theta>0$ and $P(S^i < \infty)>0$ are equivalent. If $\theta>0$, then 
	\begin{equation}\label{Eq:ThetaGen} \theta = \frac{ P(S^i_{\pm} < \infty) }{E(S^t)}. \end{equation} 
	If in addition also $P(S^i_{\pm} < \infty)=1$, then 
	\begin{align}
	\label{eq:clustersize:mean}
	\theta^{-1} &= E(S^t) \leq E(S^i)=1+2E(S^i_{\pm}), \\
	\label{eq:clustersize:second}
	E(S^i) &= \frac{E((S^t)^2)}{E(S^t)}.
	\end{align}
	The right-hand side in \eqref{eq:clustersize:mean} and both sides in \eqref{eq:clustersize:second} may equal infinity. Equality in \eqref{eq:clustersize:mean} only arises if there exists $k \in \mathbb{N}$ such that $P(S^i=k)=1$.
	\end{enumerate}
\end{theorem}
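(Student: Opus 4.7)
The plan is to reduce everything to manipulations of two identities already at hand: the representation $P(S^i = k) = k\,[P(S^i_\pm = k-1) - P(S^i_\pm = k)]$ from Theorem~\ref{Th:FinClusUnif}(a) and the size-biasing formula $\pi_k = \theta^{-1}[P(S^i_\pm = k-1) - P(S^i_\pm = k)]$ from Proposition~\ref{Prop:ClusterPMF}, which together yield the clean relation $P(S^i = k) = k\,\theta\,\pi_k$ for every $k \in \mathbb{N}$.

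For part~(a), I would start from the monotonicity of the pmf of $S^i_\pm$ recorded in Theorem~\ref{Th:timechange:eq}(b). If $\theta = P(S^i_\pm = 0) = 0$, monotonicity forces $P(S^i_\pm = k) = 0$ for every $k \in \mathbb{N}_0$, so $P(S^i_\pm = \infty) = 1$; the reverse implication is immediate. Next, $P(S^i_\pm = \infty) = 1$ gives $P(S^i = \infty) = 1$ via $S^i = S^i_- + 1 + S^i_+$, while the converse direction is exactly Theorem~\ref{Th:FinClusUnif}(c). Part~(b)'s first equivalence is then the contrapositive of~(a).

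To establish $\theta = P(S^i_\pm < \infty)/E(S^t)$, I would sum \eqref{Eq:SifromSt} over $\ell \in \mathbb{N}_0$ and exchange the order of summation by Tonelli, which gives $P(S^i_\pm < \infty) = \theta \sum_{i \ge 1} i\, \pi_i = \theta\, E(S^t)$. Specializing to $P(S^i_\pm < \infty) = 1$ produces $E(S^t) = \theta^{-1}$, and $E(S^i) = 1 + 2E(S^i_\pm)$ follows from the decomposition of $S^i$ combined with the symmetry in Theorem~\ref{Th:timechange:eq}(a). Plugging $P(S^i = k) = k\,\theta\,\pi_k$ into $E(S^i) = \sum_k k\,P(S^i = k)$ yields $E(S^i) = \theta\, E((S^t)^2) = E((S^t)^2)/E(S^t)$, which is precisely \eqref{eq:clustersize:second}, and it makes the inequality $E(S^t) \le E(S^i)$ equivalent to $\mathrm{Var}(S^t) \ge 0$.

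For the equality case, $\mathrm{Var}(S^t) = 0$ means $S^t \equiv k$ a.s. for some $k \in \mathbb{N}$; from $\pi_k = 1$ and $P(S^i = k) = k\theta$ summing to $1$ we deduce $\theta = 1/k$, hence $P(S^i = k) = 1$, and the converse is retrieved through \eqref{Eq:Spmcond}. The main obstacle I expect is purely bookkeeping: one has to track carefully when $P(S^i_\pm < \infty)$ lies strictly between $0$ and $1$ (since \eqref{Eq:ThetaGen} is needed in that intermediate regime) and to accommodate the possibility that $E((S^t)^2)$, and with it $E(S^i)$, is infinite in \eqref{eq:clustersize:second}. Once the identity $P(S^i = k) = k\,\theta\,\pi_k$ is in hand, however, no analysis beyond Tonelli and elementary series manipulation is required.
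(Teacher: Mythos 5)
Your proposal is correct and follows essentially the same route as the paper: it rests on the same key ingredients, namely Theorem~\ref{Th:FinClusUnif}, Proposition~\ref{Prop:ClusterPMF} (your identity $P(S^i=k)=k\theta\pi_k$ is just the combination of \eqref{Eq:StfromSi} with Theorem~\ref{Th:FinClusUnif}(a)), the tail-sum/Abel computation for \eqref{Eq:ThetaGen}, and Cauchy--Schwarz for the inequality. The only cosmetic differences are that you derive $\theta=0\Rightarrow P(S^i_\pm=\infty)=1$ from the monotonicity of the pmf rather than from Theorem~\ref{Th:FinClusUnif}(b), and your normalization argument $k\theta=\sum_j P(S^i=j)=1$ in the equality case is a slightly slicker version of the paper's contradiction argument.
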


\begin{proof}
	We start with showing the equivalences stated in (a) and the first part of (b). Note first that if $P(S^i=k)>0$ for some $k \in \mathbb{N}$, then 
	$$\theta=P(S^i_{\pm}=0) \geq P(S^i_{\pm}=0 \mid S^i=k) P(S^i=k)>0 $$
	by Theorem~\ref{Th:FinClusUnif}(b) and so
	\begin{equation}\label{Eq:EquivI} P(S^i<\infty)>0 \; \Rightarrow \theta>0.
	\end{equation}
    Furthermore, 
    \begin{equation}\label{Eq:EquivII} P(S^i=\infty)=1  \; \Leftrightarrow \; P(S^i_\pm =\infty)=1,
    \end{equation}
	due to Theorem~\ref{Th:FinClusUnif}(c) (for the direction $\Rightarrow$) and since $S^i \geq S^i_+$ a.s. (for the direction $\Leftarrow$). Finally, 
	\begin{equation}\label{Eq:EquivIII} P(S^i_\pm =\infty)=1 \; \Rightarrow \; \theta=0, 
		\end{equation}
	since $\theta=P(S^i_\pm=0)$. Relations \eqref{Eq:EquivI}, \eqref{Eq:EquivII} and \eqref{Eq:EquivIII} together yield the desired equivalences. 
	
	We continue with the remainder of statement (b). Assume $\theta>0$. Since $P(S^t<\infty)=1$, Proposition \ref{Prop:ClusterPMF} and Theorem~\ref{Th:FinClusUnif}(a) allow us to write
		\begin{align*}
			E(S^t)=\sum_{k= 1}^\infty k \pi_k 
			&= \theta^{-1} \sum_{k= 1}^\infty k \cdot [P(S^i_{\pm} = k-1) - P(S^i_{\pm}= k)] \\
			&= \theta^{-1} \sum_{k=0}^\infty P(S^i_{\pm} = k) = \theta^{-1}P(S^i_{\pm}<\infty),
			\end{align*}
		which gives \eqref{Eq:ThetaGen}. Using the additional assumption $P(S^i_{\pm}<\infty)=1$ gives the first equality in \eqref{eq:clustersize:mean} and the last equality follows simply from \eqref{Eq:Splusminus}. Next we show \eqref{eq:clustersize:second} and start with
		\begin{align*}
		E\bigl((S^t)^2\bigr) &= \sum_{k=1}^\infty k^2  \pi_k = \theta^{-1} \sum_{k=1}^\infty k^2  [P(S^i_{\pm} = k-1) - P(S^i_{\pm}= k)] \\
		&= \theta^{-1} \sum_{k=1}^\infty k  P(S^i = k) = E(S^t) E(S^i),	
	    \end{align*}
 where we used Theorem~\ref{Th:FinClusUnif}(a) in the penultimate equation and $\theta^{-1}=E(S^t)$ in the final one. This yields \eqref{eq:clustersize:second}, where both sides may equal infinity. 
 
 Now, the central inequality in \eqref{eq:clustersize:mean} follows from \eqref{eq:clustersize:second} and the Cauchy--Schwarz inequality, since they together imply that
 $$ 1+2 E(S^i_{\pm})=E(S^i)=\frac{E((S^t)^2)}{E(S^t)} \geq E(S^t). $$
	Equality only holds if $\mbox{Var}(S^t)=0$ and thus there exists $k \in \mathbb{N}$ with $P(S^t=k)=1$. Now, if we assume that $\ell \in \mathbb{N}, \ell \neq k$ exists with $P(S^i=l)>0$ then we have by Theorem~\ref{Th:FinClusUnif}(b) that
	$$ P(S^t=\ell)=\frac{P(S^i=\ell, S_-^i=0)}{P(S_-^i=0)}=\frac{P(S^i=\ell)}{P(S_-^i=0)}P(S^i_-= 0|S^i=\ell)=\frac{P(S^i=\ell)}{\ell P(S_-^i=0)}>0, $$
	yielding a contradiction. Furthermore, as $E(S^i)=E(S^t)=k<\infty$ and thus $P(S^i<\infty)=1$, we have $P(S^i=k)=1$. 
	\end{proof}
Finally, we note that the inequality 
$$ E(S^t)\leq E(S^i) $$ 
always holds under the assumptions on Proposition~\ref{Prop:ClusterPMF}, as it either follows from Theorem~\ref{prop:clustersize} if $P(S^i_{\pm}<\infty)=1$ or from $E(S^i)=\infty$ if $P(S^i_{\pm}<\infty)<1$. 

\section{Examples}
We conclude with two examples to illustrate our findings and a remark about their statistical applications.

\begin{example}
	\label{ex:mm}
	Assume that 
	 $$ X_t = \max(Z_t, \ldots, Z_{t-r+1}), \;\;\; t \in \mathbb{Z},$$
	 for some $r \in \mathbb{N}$ and i.i.d.\ $Z_t$, $t \in \mathbb{Z},$ with $P(Z_t\leq x)=\exp(-1/x)$ for $x > 0$, i.e.\ $(X_t)_{t \in \mathbb{Z}}$ is a stationary moving maxima process with unit-Fr\'{e}chet distributed innovations. It can be shown (see \cite[Theorem 3]{Meinguet} for details) that $(X_t)_t$ is regularly varying and has tail process \eqref{Eq:TailProc} of the form
	 $$ Y_t= (Y \mathds{1}_{\{t+I \in \{1, \ldots, r\}\}}), \;\;\; t \in \mathbb{Z}, $$
	with $P(Y>x)=1-x^{-1}$ for  $x \geq 1$ and $Y$ is independent of $I$, which is uniformly distributed on $\{1, \ldots, r\}$. Thus, the inspected cluster size $S^i$ equals $r$ almost surely and
	\begin{equation*}
	(S^i_-, S^i_+) \overset{d}{=} (r-I, I-1).
	\end{equation*}
	which gives $\theta=1/r$. According to the final sentence in Theorem \ref{prop:clustersize}(a), the inequality ~\eqref{eq:clustersize:mean} becomes an equality and all expressions in it equal $r$. The inspected cluster size distribution $(\pi_k)_{k \ge 1}$ is degenerate at $r$, i.e.\ $\pi_k = \mathds{1}_{\{k=r\}}$.
\end{example}

\begin{example}
	\label{ex:urn}
	Urn models can be used to model the dynamic risk situation of a system, see, e.g., \cite{CirilloHuesler}. We restrict ourselves in this example to a very simple process in which at each time $t \in \mathbb{Z}$ a ball is randomly drawn with replacement from an urn filled with $g_n$ green balls, $y_n$ yellow balls and $r_n$ red balls, $g_n, y_n, r_n \in \mathbb{N}$, where the index $n \in \mathbb{N}$ allows us to look at a sequence of urn models with a varying composition of balls. The system modelled by urn $n$ is at risk at time $t \in \mathbb{Z}$, indicated by $I_t^n=1$, if either a red ball is drawn at time $t$ or if a red ball has been drawn at some time $s<t$ and no green balls have been drawn in $\{s+1, \ldots, t\}$. If the system is not at risk at time $t$, we set $I_t^n=0$. This way, $(I_t^n)_{t \in \mathbb{Z}}$ are stationary binary processes.
	
	Let now $r_n=o(y_n+g_n)$ with $r_n>0$, so that the event of drawing a red ball can be seen as an extreme event as $n \to \infty$. Assume furthermore that $g_n/(y_n+g_n) \to \rho \in (0,1]$ and define $(I_t)_{t \in \mathbb{N}}$ as in Assumption~\ref{Ass:1}. Due to the i.i.d.\ draws in the underlying urn models it is easily seen that $S^i_+$ (and therefore also $S^i_-$) has a $\operatorname{Geo}_0(\rho)$-distribution, i.e.\ $P(S^i_+=k)=\rho(1-\rho)^{k}$ for $k \in \mathbb{N}_0$. Thus, from Proposition~\ref{Prop:BeforeAfter} we obtain the joint distribution 
	$$ P(S^i_+ \geq k, S^i_- \geq \ell)=P(S^i_- \geq k+\ell)=(1-\rho)^{k+\ell}=P(S^i_- \geq k)P(S^i_+ \geq \ell), \;\;\; k,\ell \in \mathbb{N},$$
	so that $S^i_-$ are $S^i_+$ are independent and identically distributed. This independence then gives 
	$$ \pi_k=P(S^i=k \mid S^i_-=0)=P(S^i_+=k-1)=\rho (1-\rho)^{k-1}, \;\;\; k \in \mathbb{N},$$
	so $S^t \overset{d}{=} S^i_{\pm}+1$.
	We thus have
	$$ \theta^{-1}=E(S^t)=\frac{1}{\rho} \leq \frac{2-\rho}{\rho}= 1+ 2 E(S^i_-)=E(S^i), $$
	with strict inequality if $\rho<1$.
\end{example}
\begin{remark} In the previous example, the forward process $(I_t)_{t \geq 1}$ can easily be shown to be independent of the backward process $(I_t)_{t \leq -1}$. More generally, certain Markov-type structures of the underlying processes $(I_t^n)_{t \in \mathbb{N}}$ may lead to this independence property of the forward and backward processes. In this case, it follows that $S^i_+$ and $S^i_-$ are independent and therefore, from Proposition~\ref{Prop:BeforeAfter}, that for all $k, \ell \in \mathbb{N}_0$,
	$$ P(S^i_{\pm} \geq k+\ell|S^i_{\pm} \geq k)=\frac{P(S^i_+ \geq k, S^i_- \geq \ell)}{P(S^i_{\pm} \geq k)}=\frac{P(S^i_{\pm} \geq k)P(S^i_{\pm} \geq \ell)}{P(S^i_{\pm} \geq k)}=P(S^i_{\pm} \geq \ell), $$
which implies that the distribution of $S^i_{\pm}$ must be geometric. This makes our findings relevant for statistical inference as here some structural property of the underlying process allows us to conclude a parametric model for the inspected cluster size. 
	\end{remark}

\begin{acks}
The authors are grateful to Holger Drees and Holger Rootz\'{e}n for bringing this topic to their attention and for helpful discussions. They are also thankful to Hrvoje Planinić for his valuable comments on the manuscript. 
\end{acks}


\end{document}